\newcommand{\bea}{\begin{eqnarray}}
\newcommand{\eea}{\end{eqnarray}}
\newcommand{\bna}{\begin{eqnarray*}}
\newcommand{\ena}{\end{eqnarray*}}
\numberwithin{equation}{section}
\theoremstyle{plain}
\newtheorem*{Theorem A}{Theorem A}
\newtheorem*{Theorem B}{Theorem B}
\newtheorem{problem}{Problem}[section]
\newtheorem{lemma}{Lemma}[section]
\newtheorem{theorem}{Theorem}[section]
\theoremstyle{definition}
\newtheorem{remark}{Remark}
\begin{document}

\title
[{Solutions to some sign change problems on the functions involving sums of divisors}] {Solutions to some sign change problems on the functions involving sums of divisors}

\author
[Y. Ding \& H. Pan \& Y.--C. Sun] {Yuchen Ding, Hao Pan and Yu--Chen Sun}

\address{(Yuchen Ding) School of Mathematical Science,  Yangzhou University, Yangzhou 225002, People's Republic of China}
\email{\tt ycding@yzu.edu.cn}

\address{(Hao Pan) School of Applied Mathematics, Nanjing University of Finance and Economics, Nanjing 210023, People’s Republic of China}
\email{\tt haopan79@zoho.com}

\address{(Yu--Chen Sun) Department of Mathematics and Statistics, University of Turku, Turku 20014 , Finland}
\email{\tt yucsun@utu.fi}

\subjclass[2010]{11A25, 11B25, 11N64}

\keywords{sum of divisors function, arithmetic progressions, sign change, Dirichlet's theorem in arithmetic progressions.}

\begin{abstract}
In this note, we solve some sign change problems on the functions involving sums of divisors posed by Pongsriiam recently.
\end{abstract}

\maketitle
\section{Introduction}
Inspiring by the prior results of Erd\H os \cite{Erdos1,Erdos2}, Wang and Chen \cite{wangchen}, Pongsriiam \cite{Pongsriiam} investigated the arithmetic functions on generalized sums of divisors functions, i.e.,
$$
\sigma_s(n)=\sum_{d|n} d^s,
$$
where $s$ is a real number. He obtained various results in the same spirit of Newman's theorem \cite{Newman} which states that there are infinitely many $n$ such that
$$
\varphi(30n+1)<\varphi(30n),
$$
where $\varphi(n)$ is the Euler totient function. Newman's result is of particular interest because Jarden \cite[page 65]{Jarden} observed that
\begin{align}\label{0105-1}
\varphi(30n+1)>\varphi(30n)
\end{align}
for all $n\le 10^5$. The fact that there are infinite many $n$ making Eq. (\ref{0105-1}) true follows immediately from Dirichlet's theorem in arithmetic progressions. In other words, we know that
$$
\varphi(30n+1)-\varphi(30n)
$$
changes sign infinitely often, thanks to the theorems of Dirichlet and Newman. It contributes to another famous example of the Chebyshev bias phenomenon \cite{Chebyshev}.

From now on, let $\mathbb{N}$ and $\mathcal{P}$ be the sets of positive integers and primes, respectively. In Pongsriiam's article \cite{Pongsriiam}, he posed several open problems for further research. 

\begin{problem}[\cite{Pongsriiam}, Problem 3.7]\label{problem1}
Suppose $s>1$ and $a,b,c,d$ are positive integers. If
$$
\sigma_s(an+b)-\sigma_s(cn+d)
$$
changes sign infinitely often, is it true that $a=c$? If it is not true, assuming further that $s$ is large and $(a,b)=(c,d)=1$, can we conclude that $a=c$?
\end{problem}

This problem, as one can imagine, is motivated by Pongsriiam's result \cite[Theorem 2.10]{Pongsriiam}, which states that if $a,b,c,d$ are integers with $a>c>0$ and $b\ge d\ge 0$, then there exists $s_0>1$ such that
$$
\sigma_s(an+b)>\sigma_s(cn+d)
$$
for all $s\ge s_0$ and $n\ge 1$.

\begin{problem}[\cite{Pongsriiam}, Problem 3.8 (ii)]\label{problem2}
Is it true that
$$
\sum_{n\le K}\sigma(30n)>\sum_{n\le K}\sigma(30n+1)
$$
for all $K\in \mathbb{N}$?
\end{problem}

Pongsriiam \cite[Theorem 2.4]{Pongsriiam}  himself proved that
$$
\sigma(30n)-\sigma(30n+1)
$$
has infinitely many sign changes. Thus, Problem \ref{problem2}, if true, is certainly nontrivial.

\begin{problem}[\cite{Pongsriiam}, last two lines of Example 2.13]\label{problem3}
Are there infinitely many $m\in \mathbb{N}$ for which the inequalities 
$$
\sigma(2m+5)>\sigma(6m+17) \quad\quad \text{and} \quad\quad \sigma(5m+4)>\sigma(6m+7)
$$
simultaneously hold.
\end{problem}

This problem is motivated by Pongsriiam's result \cite[Theorem 2.11]{Pongsriiam}, where he proved that
$$
\sigma_s(2m+5)<\sigma_s(6m+17) \quad\quad \text{and} \quad\quad \sigma_s(5m+4)<\sigma_s(6m+7)
$$
for all $m\in \mathbb{N}$ and $s>3$.

The aim of our note is the following progress on the problems above.

\begin{theorem}\label{thm1}
The answer to Problem \ref{problem1} is negative.
\end{theorem}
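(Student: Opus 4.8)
The plan is to refute Problem \ref{problem1} in its strongest form by producing, for \emph{every} real $s>1$, positive integers $a,b,c,d$ with $a\neq c$ and $(a,b)=(c,d)=1$ such that $\sigma_s(an+b)-\sigma_s(cn+d)$ changes sign infinitely often. Since one construction covers all $s>1$, it answers both questions at once: sign changes do not force $a=c$, and the extra hypotheses ``$s$ large'' and coprimality do not rescue the conclusion either.

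The starting point is the identity $\sigma_s(m)=m^s\sigma_{-s}(m)$, where $\sigma_{-s}(m)=\sum_{e\mid m}e^{-s}\in[1,\zeta(s)]$. This factor is essentially $1$ when $m$ is prime and can be pushed arbitrarily close to $\zeta(s)$ by forcing $m$ to be divisible by a large product of prime powers. The crucial observation, which explains why largeness of $s$ is no obstruction, is that although $(a/c)^s$ blows up when $a/c$ is bounded away from $1$, we may instead take $a=c+1$ with $c$ enormous, so that $(a/c)^s=(1+1/c)^s$ is as close to $1$ as we please. Because $\zeta(s)>1$ for every $s>1$, for $c$ large enough (concretely a prime $c$ exceeding roughly $s2^s$) we get $(a/c)^s$ strictly below $\zeta(s)(1-c^{-s})$, the supremum of $\sigma_{-s}(Q)$ over moduli $Q$ coprime to $c$. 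This gap is what lets the difference take both signs.

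Concretely, I would fix such a prime $c$, set $a=c+1$ and $b=d=1$, and choose a modulus $Q$ coprime to $c$, built from high powers of the small primes, with $\sigma_{-s}(Q)>(a/c)^s$. For the positive sign I let $n$ run through the infinitely many integers with $cn+1$ prime (Dirichlet, since $(c,1)=1$); then $\sigma_s(cn+1)=(cn+1)^s+1$, while $\sigma_s(an+1)\ge(an+1)^s$, and as $a>c$ the gap $(an+1)^s-(cn+1)^s\to\infty$ forces $\sigma_s(an+1)>\sigma_s(cn+1)$ for all large such $n$. For the negative sign I impose $cn+1\equiv0\pmod Q$ (uniquely solvable mod $Q$ since $(c,Q)=1$) and require $an+1$ prime; along the class $n\equiv n_0\pmod Q$ the values $an+1$ form an arithmetic progression with common difference $aQ$, and $\gcd(an_0+1,aQ)=1$ — the part coprime to $a$ is $\gcd(1,a)=1$, and modulo $Q$ one computes $an_0+1\equiv n_0\equiv -c^{-1}$, a unit — so Dirichlet again yields infinitely many such $n$. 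For these, $\sigma_s(an+1)=(an+1)^s+1$ whereas $\sigma_s(cn+1)\ge(cn+1)^s\sigma_{-s}(Q)$, and since $\big((an+1)^s+1\big)/(cn+1)^s\to(a/c)^s<\sigma_{-s}(Q)$, we obtain $\sigma_s(an+1)<\sigma_s(cn+1)$ for all large such $n$. Both signs occurring infinitely often yields infinitely many sign changes, while $a=c+1\neq c$.

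The only genuinely delicate point is the quantitative comparison $(a/c)^s<\sigma_{-s}(Q)\le\zeta(s)(1-c^{-s})$: one must check that a prime $c$ with $(1+1/c)^s<\zeta(s)(1-c^{-s})$ exists for every $s>1$, and that a modulus $Q$ coprime to $c$ realizing $\sigma_{-s}(Q)>(1+1/c)^s$ can then be selected. Both reduce to the elementary facts that $(1+1/c)^s\to1$ as $c\to\infty$ and that $\sup_{(Q,c)=1}\sigma_{-s}(Q)=\zeta(s)(1-c^{-s})$ is approached by taking ever higher prime-power divisors coprime to $c$. The two Dirichlet applications and the coprimality bookkeeping are then routine, the choice $b=d=1$ having been made precisely to trivialize the residue computations.
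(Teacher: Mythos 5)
Your proposal is correct and follows essentially the same strategy as the paper: take two forms whose leading coefficients differ by $1$ with the smaller one large (so the ratio raised to the $s$-th power stays near $1$), then apply Dirichlet twice --- once making the smaller form prime so the larger form wins, and once making the larger form prime while a congruence condition forces extra small divisors on the smaller form so it wins. The paper realizes this with the forms $an+2$ and $(a+1)n+1$ and the fixed modulus $2$ (taking $n$ even), whereas you use $cn+1$ and $(c+1)n+1$ with a general auxiliary modulus $Q$; this is a purely cosmetic difference, your extra generality costing only slightly more coprimality bookkeeping.
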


\begin{theorem}\label{thm2}
The answer to Problem \ref{problem2} is positive providing that $K$ is sufficiently large.
\end{theorem}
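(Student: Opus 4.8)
The plan is to compare the two partial sums through their leading-order asymptotics, which will turn out to differ by a positive multiple of $K^2$. Writing $m=30n$ and $m=30n+1$, I first recast both quantities as partial sums of $\sigma$ along residue classes: for integers $a\ge 1$ and $b\ge 0$ set
$$
S_a(b;x)=\sum_{\substack{m\le x\\ m\equiv b\,(\mathrm{mod}\,a)}}\sigma(m),
$$
so that $\sum_{n\le K}\sigma(30n)=S_{30}(0;30K)$ and $\sum_{n\le K}\sigma(30n+1)=S_{30}(1;30K+1)-1$, the final $-1$ removing the term $\sigma(1)$ coming from $n=0$.

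Next I would establish an asymptotic formula for $S_a(b;x)$. Using $\sigma(m)=\sum_{d\mid m}m/d$ and writing $m=de$, one has
$$
S_a(b;x)=\sum_{d\ge 1}\ \sum_{\substack{e\le x/d\\ de\equiv b\,(\mathrm{mod}\,a)}}e.
$$
For fixed $d$ the congruence $de\equiv b\pmod a$ is solvable precisely when $g:=\gcd(d,a)$ divides $b$, in which case the admissible $e$ run over a single residue class modulo $a/g$; summing such $e$ up to $x/d$ gives $\tfrac{g}{2a}(x/d)^2+O(x/d)$. Summing over $d\le x$ and extending the resulting convergent series to infinity yields
$$
S_a(b;x)=\frac{x^2}{2a}\,D_a(b)+O(x\log x),\qquad D_a(b)=\sum_{\substack{d\ge 1\\ \gcd(d,a)\mid b}}\frac{\gcd(d,a)}{d^2}.
$$
The error term is genuinely of lower order than the main term, which is what will make the eventual comparison decisive.

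It then remains to evaluate the two constants. Since $\gcd(d,30)\mid 0$ holds for every $d$, whereas $\gcd(d,30)\mid 1$ forces $\gcd(d,30)=1$, the identity $\gcd(d,30)=\sum_{e\mid\gcd(d,30)}\varphi(e)$ separates the variables and gives
$$
D_{30}(0)=\zeta(2)\prod_{p\mid 30}\Big(1+\frac{p-1}{p^2}\Big)=\frac{319\pi^2}{1080},\qquad D_{30}(1)=\zeta(2)\prod_{p\mid 30}\Big(1-\frac{1}{p^2}\Big)=\frac{8\pi^2}{75}.
$$
Inserting these into the asymptotic formula (noting $(30K)^2/(2\cdot 30)=15K^2$, and that the linear-in-$K$ discrepancy for $b=1$ is absorbed into the error) and subtracting gives
$$
\sum_{n\le K}\sigma(30n)-\sum_{n\le K}\sigma(30n+1)=15K^2\big(D_{30}(0)-D_{30}(1)\big)+O(K\log K)=\frac{1019\pi^2}{360}K^2+O(K\log K),
$$
which is positive once $K$ is large enough.

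The only delicate point is the derivation of the asymptotic for $S_a(b;x)$ — in particular confirming that the accumulated contribution of the individual $O(x/d)$ terms together with the tail of the series $\sum_{d>x}\gcd(d,a)/d^2$ is $O(x\log x)$, strictly below the $K^2$ gap between the main terms. This is a routine divisor-sum estimate once the complementary-divisor substitution $m=de$ is in place, so I expect no serious obstacle beyond bookkeeping. It is worth emphasizing that the strict inequality $D_{30}(0)>D_{30}(1)$ is structural rather than numerical: every term of $D_{30}(0)$ carries weight $\gcd(d,30)\ge 1$ and the sum ranges over all $d$, while $D_{30}(1)$ retains only the coprime terms with weight exactly $1$, so the gap $D_{30}(0)-D_{30}(1)=\sum_{\gcd(d,30)>1}\gcd(d,30)/d^2$ is manifestly positive.
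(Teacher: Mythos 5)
Your proof is correct, and it reaches the same conclusion as the paper by a genuinely more self-contained route. The paper's proof simply invokes the cited asymptotic formula (Lemma \ref{lem2}, from the Handbook of number theory) for $\sum_{m\le x}\sigma(f(m))$ with $f(x)=30x$ and $f(x)=30x+1$, evaluates the two constants $\beta=\sum_d N(d)/d^2$ as Euler products using the multiplicativity of $N$, and concludes that the ratio of the two partial sums tends to $\tfrac{1595}{576}>1$. You instead derive the needed asymptotic from scratch by the complementary-divisor (hyperbola) method: writing $\sigma(m)=\sum_{de=m}e$ and counting $e$ in residue classes modulo $30/\gcd(d,30)$, which is exactly the elementary argument hiding behind the degree-one case of Lemma \ref{lem2}. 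Indeed your constants are literally the paper's: for $f(x)=30x$ the root-count is $N(d)=\gcd(d,30)$, so $D_{30}(0)=\beta$, and for $f(x)=30x+1$ one has $N(d)=1$ precisely when $(d,30)=1$, so $D_{30}(1)=\beta$ there as well; your closed forms $\tfrac{319\pi^2}{1080}$ and $\tfrac{8\pi^2}{75}$ have ratio $\tfrac{1595}{576}$, matching the paper. What your route buys is independence from the cited lemma (whose general statement, for arbitrary degree, is far harder than what is needed here) and an explicit quantitative gap $\tfrac{1019\pi^2}{360}K^2+O(K\log K)$ rather than just a limit of ratios; what the paper's route buys is brevity and a framework that would apply verbatim to other polynomials. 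Your closing observation that $D_{30}(0)-D_{30}(1)=\sum_{\gcd(d,30)>1}\gcd(d,30)/d^2$ is structurally positive is a nice touch not present in the paper, and your bookkeeping of the error terms (the $O(x/d)$ pieces summing to $O(x\log x)$, and the tail $\sum_{d>x}\gcd(d,30)/d^2$ contributing $O(x)$) is sound.
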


\begin{theorem}\label{thm3}
The answer to Problem \ref{problem3} is positive.
\end{theorem}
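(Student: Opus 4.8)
The plan is to reparametrise everything through $A:=2m+5$ and to read the four quantities as $B:=6m+17$, $C:=5m+4$, $D:=6m+7$, recording the asymptotic ratios $B/A\to 3$ and $D/C\to 6/5$ as $m\to\infty$. Since $\sigma(n)/n=\sum_{d\mid n}1/d$, divisibility of $n$ by a fixed integer $e$ forces $\sigma(n)/n\ge\sigma(e)/e$. So I would first fix a large \emph{odd} primorial $Q=\prod_{3\le p\le y}p$ (odd because $2m+5$ is always odd) with $y$ chosen so that $\sigma(Q)/Q=\prod_{3\le p\le y}(1+1/p)>3$; such $y$ exists because the product diverges. Imposing $Q\mid 2m+5$ then yields $\sigma(2m+5)\ge(\sigma(Q)/Q)(2m+5)$, and imposing $2\mid 5m+4$ (i.e.\ $m$ even) yields $\sigma(5m+4)\ge\tfrac32(5m+4)$. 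With these two lower bounds in hand, the desired inequalities would follow from upper bounds $\sigma(B)/B\to 1$ and $\sigma(D)/D\to 1$: the first inequality reduces to the clearance $\sigma(Q)/Q>3$ (the limiting value of $B/A$), and the second to $\tfrac32>6/5$ (the limiting value of $D/C$), both holding with room to spare.

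Thus the crux is to produce infinitely many admissible $m$ for which both $6m+17$ and $6m+7$ have \emph{no small prime factor}, say no prime factor $\le z=m^{c}$ for a fixed small $c>0$; for such $m$ each of $B,D$ is $<7m$ and $z$-rough, hence has $O(1/c)$ prime factors, all exceeding $m^{c}$, so $\sigma(B)/B,\sigma(D)/D\le\bigl(1+\tfrac1{m^{c}-1}\bigr)^{O(1/c)}\to1$. I would merge the two congruence constraints by the Chinese Remainder Theorem into a single progression $m\equiv m_{0}\pmod{2Q}$, and then check compatibility with the roughness requirement: for every odd prime $p\mid Q$ one computes, using $2m_{0}\equiv-5\pmod p$, that $6m_{0}+17\equiv2$ and $6m_{0}+7\equiv-8\pmod p$, both nonzero; and $B,D$ are always odd. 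Hence along the progression $B$ and $D$ are automatically coprime to $2Q$, so only the primes $p>y$ remain to be sieved.

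The final step is a two-dimensional sieve applied to $m\equiv m_{0}\pmod{2Q}$, $m\le x$, removing those $m$ for which some prime $p\in(y,z]$ divides $(6m+17)(6m+7)$. I would verify admissibility by bounding the number of roots $\rho(p)$ of this quadratic congruence: the primes $2,3$ never divide either form, $p=5$ contributes a single residue, and every $p\ge7$ contributes at most two, so the sifting dimension is $2$ and $\prod_{p\le z}(1-\rho(p)/p)\asymp(\log z)^{-2}$. Taking $z=x^{c}$ with $c$ small, the error terms for linear forms are trivially of level $x^{1-\varepsilon}$, so the fundamental lemma of sieve theory yields a count $\gg x(\log x)^{-2}$ of surviving $m\le x$; in particular there are infinitely many. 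For all sufficiently large surviving $m$, the abundancy estimates above give $\sigma(2m+5)>\sigma(6m+17)$ and $\sigma(5m+4)>\sigma(6m+7)$ simultaneously, which is the assertion.

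The main obstacle is precisely this last step: one cannot simply demand that $6m+7$ and $6m+17$ both be prime, since that is an (open) prime-pair problem, so the whole weight of the argument is thrown onto showing that the two linear forms are \emph{simultaneously almost-prime} along a prescribed progression. The saving grace is that the needed upper bounds on $\sigma(B)/B$ and $\sigma(D)/D$ only have to tend to $1$, so it suffices to clear prime factors up to a small power $x^{c}$; this keeps the sifting parameter $s=1/c$ a bounded constant and lets the fundamental lemma run unconditionally, with no appeal to Bombieri--Vinogradov-type equidistribution.
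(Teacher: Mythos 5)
Your argument is correct in outline, and it reaches Theorem \ref{thm3} by a genuinely different route from the paper. The paper deduces Theorem \ref{thm3} from a general statement (Theorem \ref{thm4}) whose engine is Heath--Brown's almost-prime $k$-tuples theorem: it produces $\gg x(\log x)^{-2}$ integers $m$ with $\Omega(6m+17),\,\Omega(6m+7)\le G_k$ (so that $\sigma(N)/N\le 2^{G_k}$ for these two values), and must then force, via the Chinese Remainder Theorem, divisibility of $2m+5$ and $5m+4$ by whole blocks of consecutive primes whose reciprocal sums exceed the large constant $A2^{G_k+3}$, the two constants being calibrated against each other. You instead push the upper bound all the way to $\sigma(N)/N=1+o(1)$ by making $6m+17$ and $6m+7$ free of prime factors below $x^{c}$, using only the fundamental lemma of sieve theory in dimension $2$; your admissibility bookkeeping is correct ($\rho(2)=\rho(3)=0$, $\rho(5)=1$, $\rho(p)\le2$ for $p\ge 7$, the resultant of the two forms being $60$), as is the observation that along the progression the forms are automatically coprime to $2Q$ because $6m_0+17\equiv 2$ and $6m_0+7\equiv -8 \pmod{p}$ for every odd $p\mid Q$. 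That sharper upper bound is what lets you get away with far weaker forced divisibility --- a fixed odd primorial $Q$ with $\sigma(Q)/Q>3$ dividing $2m+5$, and mere evenness of $5m+4$ --- calibrated against the size ratios $(6m+17)/(2m+5)\to 3$ and $(6m+7)/(5m+4)\to 6/5$ rather than against a sieve constant. What each approach buys: yours is more self-contained and classical (Brun's fundamental lemma with trivial level of distribution for linear forms, no appeal to Heath--Brown) and exploits the specific numerology of Problem \ref{problem3}; the paper's yields the stronger and more general Theorem \ref{thm4}, valid for arbitrarily many pairs of forms subject only to $a_id_j\neq b_ic_j$ and the fixed-divisor condition, with Heath--Brown's theorem absorbing all sieve-theoretic work. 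Both arguments end up producing $\gg x(\log x)^{-2}$ admissible $m\le x$; the details you leave implicit (the remainder estimate at level $x^{1/2}$, and passing from a count for each large $x$ to infinitely many $m$) are routine to fill in.
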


\section{Proof of Theorem \ref{thm1}}
For the proof of Theorem \ref{thm1}, we need the following remarkable theorem of Dirichlet on primes in arithmetic progressions (see e.g. \cite[Page 34, Chapter 4]{Da}).

\begin{lemma}[Dirichlet]\label{lemma1}
Let $m$ and $q$ be integers such that $(m,q)=1$, then there are infinitely many primes $p\equiv m\pmod{q}$.
\end{lemma}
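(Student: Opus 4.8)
The plan is to prove this by the classical analytic argument of Dirichlet, detecting the arithmetic progression through the orthogonality of characters and reducing the infinitude of primes to an analytic non-vanishing statement. First I would introduce the group of Dirichlet characters $\chi$ modulo $q$, of which there are $\varphi(q)$, together with the orthogonality relation
\begin{align*}
\frac{1}{\varphi(q)}\sum_{\chi \bmod q}\bar{\chi}(m)\chi(n)=\begin{cases}1 & \text{if } n\equiv m \pmod q,\\ 0 & \text{otherwise},\end{cases}
\end{align*}
valid whenever $(m,q)=1$. To each character I would attach its Dirichlet $L$-function $L(s,\chi)=\sum_{n\ge 1}\chi(n)n^{-s}=\prod_{p}(1-\chi(p)p^{-s})^{-1}$, which is convergent and non-vanishing in the half-plane $\Re s>1$.

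The second step is to take logarithms of the Euler product and extract the prime contribution. For $\Re s>1$ one has $\log L(s,\chi)=\sum_{p}\chi(p)p^{-s}+O(1)$, the error coming from the absolutely convergent higher prime-power terms. Multiplying by $\bar\chi(m)$ and summing over all characters modulo $q$, the orthogonality relation collapses the resulting double sum to
\begin{align*}
\sum_{\chi \bmod q}\bar\chi(m)\log L(s,\chi)=\varphi(q)\sum_{p\equiv m\ (q)}p^{-s}+O(1).
\end{align*}
Thus the infinitude of primes $p\equiv m\pmod q$ would follow once I show the left-hand side tends to $+\infty$ as $s\to 1^{+}$, since a finite progression would force the prime sum on the right to stay bounded.

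The third step analyses each summand as $s\to 1^+$. For the principal character $\chi_0$ one has $L(s,\chi_0)=\zeta(s)\prod_{p\mid q}(1-p^{-s})$, and the simple pole of $\zeta$ at $s=1$ makes $\log L(s,\chi_0)\to+\infty$. For every non-principal $\chi$, the function $L(s,\chi)$ extends continuously to the line $\Re s=1$, so $\log L(s,\chi)$ remains bounded near $s=1$ \emph{provided} $L(1,\chi)\ne 0$. Granting this non-vanishing, the lone pole of the principal term dominates and drives the whole expression to $+\infty$, completing the argument.

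The hard part will be precisely the non-vanishing $L(1,\chi)\ne 0$ for non-principal $\chi$. For complex characters this is comparatively soft: if $L(1,\chi)=0$ then also $L(1,\bar\chi)=0$, and weighing the resulting order of vanishing against the single pole in the product $\prod_{\chi}L(s,\chi)$, whose Dirichlet coefficients are non-negative, yields a contradiction. The genuinely delicate case is that of a real quadratic non-principal character, where the conjugate trick is unavailable; here I would either invoke Dirichlet's class number formula, which realises $L(1,\chi)$ as a positive multiple of a class number, or apply Landau's theorem on Dirichlet series with non-negative coefficients to $\zeta(s)L(s,\chi)$ to exclude a zero at $s=1$. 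Since the statement is quoted verbatim from \cite{Da}, in practice I would cite the textbook proof at exactly this point rather than reproduce it.
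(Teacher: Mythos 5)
The paper does not prove this lemma at all: it is quoted as a classical black-box result with a citation to Davenport, so there is no internal argument to compare against. Your outline is the standard analytic proof found in that cited source --- orthogonality of characters, logarithms of Euler products, the pole of the principal-character term at $s=1$, and the reduction to the non-vanishing $L(1,\chi)\neq 0$ for non-principal $\chi$ --- and you correctly identify the real-character case as the genuine difficulty and resolve it by the accepted routes (Landau's theorem or the class number formula), so the proposal is sound and consistent with what the paper relies on.
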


\begin{proof}[Proof of Theorem \ref{thm1}]
Let $s>1$ be any fixed number. It suffices to prove that there exists some positive odd integer $a$ (depending on $s$) such that
$$
\sigma_s(an+2)-\sigma_s((a+1)n+1)
$$
changes sign infinitely often. The proof goes as follows:

Note firstly that $(a,2)=1$. Then by Lemma \ref{lemma1}, there are infinitely many positive integers $n>1$ such that $an+2$ are primes. For these $n$, we have
$$
\sigma_s(an+2)=1+(an+2)^s
$$
and
$$
\sigma_s((a+1)n+1)\ge 1+((a+1)n+1)^s>1+(an+2)^s,
$$
from which it follows clearly that
$$
\sigma_s(an+1)-\sigma_s((a+1)n+1)<0.
$$

Now, suppose that $n=2k$ with $k\in \mathbb{Z}^{+}$, then
$$
(a+1)n+1=2(a+1)k+1.
$$
Since $(2(a+1),1)=1$, there are infinitely many $k$ such that $(a+1)n+1$ are primes. For these $n=2k$, we have
$$
\sigma_s(an+2)\ge 1+\left(\frac{an+2}{2}\right)^s+(an+2)^s
$$
and
$$
\sigma_s((a+1)n+1)=1+((a+1)n+1)^s.
$$
Thus, for these $n=2k$ we can deduce
\begin{align*}
\sigma_s(an+2)-\sigma_s((a+1)n+1)&\ge \left(\frac{an+2}{2}\right)^s+(an+2)^s-((a+1)n+1)^s\\
&>\left(1+\frac{1}{2^s}\right)a^sn^s-(a+2)^sn^s.
\end{align*}
Now, we are going to show that
\begin{align}\label{eq03-1}
(a+2)^s<\left(1+\frac{1}{2^{s+1}}\right)a^s
\end{align}
provided that $a$ is sufficiently large, from which our theorem would follow immediately.
Actually, Eq. (\ref{eq03-1}) is equivalent to
\begin{align*}
a>2\left(\left(1+\frac{1}{2^{s+1}}\right)^{1/s}-1\right)^{-1},
\end{align*}
which is clearly true for sufficiently large odd integer $a$.
\end{proof}

\section{Proof of Theorem \ref{thm2}}

The proof of Theorem \ref{thm2} follows from the following result \cite[III. 8, page 85]{Handbook}.

\begin{lemma}\label{lem2}
Let 
$f(x)$ be a polynomial with integer coefficients, of degree $n$, and such that $f(m)>0$ for all positive integers $m$. Then we have
$$
\sum_{m\le x}\sigma(f(m))=\frac{\beta a_n}{n+1}x^{n+1}+O\left(x^n(\log x)^n\right),
$$
where 
$$
\beta=\sum_{d=1}^{\infty}\frac{N(d)}{d^2}
$$
and $a_n$ is the coefficient of $x^n$ in $f(x)$. Here $N(m)$ denotes the number of solutions, not counting multiplicities, of the congruence $f(n)\equiv 0\pmod{m}$.
\end{lemma}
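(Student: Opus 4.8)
The plan is to expand $\sigma$ through its defining divisor sum, interchange the order of summation, and thereby reduce everything to summing the polynomial $f$ over residue classes, which is exactly what the quantity $N(d)$ records. Writing $\sigma(k)=\sum_{d\mid k}k/d$ and inserting $k=f(m)$ gives
$$\sum_{m\le x}\sigma(f(m))=\sum_{d\ge 1}\frac1d\sum_{\substack{m\le x\\ f(m)\equiv 0\ (d)}}f(m),$$
where for each fixed $d$ the inner sum runs over the $N(d)$ admissible residue classes modulo $d$. On each such class I would sum $f$ along an arithmetic progression; a routine application of the Euler--Maclaurin formula (or direct summation of $\sum_{j\le x/d}f(r+jd)$) yields, for $d\le x$, the per-class estimate $\frac{a_n}{(n+1)d}x^{n+1}+O(x^n)$, and hence
$$\sum_{\substack{m\le x\\ f(m)\equiv 0\ (d)}}f(m)=\frac{a_nN(d)}{(n+1)d}\,x^{n+1}+O\!\left(N(d)x^n\right).$$

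First I would treat the small divisors $d\le x$. Summing the main term produces $\frac{a_n}{n+1}x^{n+1}\sum_{d\le x}N(d)/d^2$, and since $N$ is multiplicative with $N(p)\le n$, the series $\beta=\sum_{d}N(d)/d^2$ converges absolutely with tail $\sum_{d>x}N(d)/d^2\ll x^{-1+\varepsilon}$; this recovers the claimed leading term $\frac{\beta a_n}{n+1}x^{n+1}$ up to $O(x^{n+\varepsilon})$. The accumulated error from the $O(N(d)x^n)$ terms is $O\!\big(x^n\sum_{d\le x}N(d)/d\big)$, and here the exponent $n$ on the logarithm appears: because $N(p)\le n$ for every prime, multiplicativity gives $\sum_{d\le x}N(d)/d\ll(\log x)^n$ by a Mertens/Shiu-type estimate, the finitely many primes dividing the discriminant of $f$ contributing only a bounded factor. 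This is the origin of the $O(x^n(\log x)^n)$ term.

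It then remains to show that the large divisors $d>x$ contribute no more than the stated error. Here I would use divisor reciprocity: the substitution $e=f(m)/d$ turns $\sum_{d\mid f(m),\,d>x}\frac1d\cdot f(m)$ into $\sum_{e\mid f(m),\,e<f(m)/x}e$, so the large-$d$ part equals $\sum_{m\le x}\sum_{e\mid f(m),\,e<f(m)/x}e$. Since $f(m)/x\ll x^{n-1}$, each inner divisor is bounded by $f(m)/x$, and estimating the inner sum by $f(m)/x$ times the number of divisors gives a total of $O\!\big(\tfrac1x\sum_{m\le x}f(m)\tau(f(m))\big)$; the averaged bound $\sum_{m\le x}\tau(f(m))\ll x(\log x)^n$ together with partial summation keeps this within $O(x^n(\log x)^n)$.

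I expect the real difficulty to lie not in extracting the main term, which is forced by the convergence of $\beta$, but in the uniform control of the error across the entire range $1\le d\ll x^n$ of possible divisors of $f(m)$. The two delicate points are the multiplicative estimate $\sum_{d\le x}N(d)/d\ll(\log x)^n$, which requires knowing that $N(p^k)$ stays bounded at the bad primes dividing the discriminant, and the large-divisor tail, where the naive bound $\tau(f(m))\ll f(m)^\varepsilon$ is too lossy and must be replaced by an averaged divisor estimate. Matching both regimes to the single error term $O(x^n(\log x)^n)$ is the crux of the argument.
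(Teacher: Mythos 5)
The paper contains no proof of this lemma to compare against: it is quoted directly from \cite[III.8, p.~85]{Handbook} and used as a black box in the proof of Theorem \ref{thm2}. Judged on its own merits, your proposal follows what is essentially the classical route behind such statements: expand $\sigma(f(m))=\sum_{d\mid f(m)}f(m)/d$, interchange summation, evaluate the sum of $f$ over each of the $N(d)$ admissible residue classes by Euler--Maclaurin with per-class error $O(x^n)$ uniformly in $d\le x$, and dispose of the range $d>x$ by passing to the complementary divisor. The architecture is sound, and your identification of the two delicate points (the Mertens-type bound $\sum_{d\le x}N(d)/d\ll(\log x)^n$ and the large-divisor tail) is accurate.

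Two repairs are needed. First, your completion of the series $\beta$ uses only $N(d)\ll_\varepsilon d^\varepsilon$, giving a tail $\sum_{d>x}N(d)/d^2\ll x^{-1+\varepsilon}$ and hence an error $O(x^{n+\varepsilon})$; this is strictly weaker than the claimed $O(x^n(\log x)^n)$, since $x^{\varepsilon}$ beats any power of $\log x$. The fix is the average bound: $N$ is multiplicative with $N(p)\le n$ for all but finitely many $p$, and $N(p^k)$ stays bounded in $k$ at the bad primes (a classical fact, proved by Hensel/Newton-polygon arguments after replacing $f$ by its radical --- and note this boundedness is also silently required both for the convergence of $\beta$ and for your Mertens product, since the lemma allows $f$ with repeated factors). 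This yields $\sum_{d\le y}N(d)\ll y(\log y)^{n-1}$, and partial summation gives $\sum_{d>x}N(d)/d^2\ll x^{-1}(\log x)^{n-1}$, contributing an acceptable $O(x^{n}(\log x)^{n-1})$. Second, the bound $\sum_{m\le x}\tau(f(m))\ll x(\log x)^n$ you invoke for the range $d>x$ is not a routine estimate: the naive hyperbola-style argument fails there (the per-class error terms accumulate to a power of $x$ once moduli exceed $x$), and what you actually need is the van der Corput--Erd\H{o}s theorem on the average of $\tau$ over polynomial values, with the logarithm exponent controlled by the number of distinct irreducible factors of $f$ (which is at most $n$); Nair--Tenenbaum-type bounds also suffice. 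You correctly flag this as the crux, but it should be stated explicitly as an imported theorem with a reference, since it is the one genuinely nontrivial ingredient --- everything else in your outline is elementary and correct.
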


\begin{proof}[Proof of Theorem \ref{thm2}]
Note that for given $f$ the function $N(m)$ is multiplicative of $m$ by the Chinese reminder theorem. So we get
$$
\beta=\sum_{d=1}^{\infty}\frac{N(d)}{d^2}=\prod_{p}\left(1+\frac{N(p)}{p^2}+\frac{N(p^2)}{p^4}+\cdot\cdot\cdot\right),
$$
where the products extends over all the primes. 

Choose firstly $f(x)=30x$ in Lemma \ref{lem2}. Then for $p \nmid 30$ we clearly have $N\left(p^\alpha\right)=1$ for any $\alpha\in \mathbb{N}$.  It is also easy to see that
$N\left(p^\alpha\right)=p$
for $p=2,3,5$ and any $\alpha\in \mathbb{N}$. Thus, we have
$$
\beta=\prod_{p}\left(1+\frac{N(p)}{p^2}+\frac{N(p^2)}{p^4}+\cdot\cdot\cdot\right)=\frac{5}{3}\frac{11}{8}\frac{29}{24}\prod_{p~\!\!\nmid ~\!\!30}\left(1-p^{-2}\right)^{-1}=\frac{1595}{576}\prod_{p~\!\!\nmid ~\!\!30}\left(1-p^{-2}\right)^{-1}.
$$
Hence, we obtain
\begin{align}\label{eq16-1}
\sum_{n\le K}\sigma(30n)=\frac{1595}{576}\cdot15\prod_{p~\!\!\nmid ~\!\!30}\left(1-p^{-2}\right)^{-1}x^{2}+O\left(x\log x\right),
\end{align}
via Lemma \ref{lem2}.

Choose secondly $f(x)=30x+1$ in Lemma \ref{lem2}. Then for $p \nmid 30$ we have $N\left(p^\alpha\right)=1$ for any $\alpha\in \mathbb{N}$.  It is also easy to see that
$N\left(p^\alpha\right)=0$
for $p=2,3,5$ and any $\alpha\in \mathbb{N}$. Thus, we have
$$
\beta=\prod_{p}\left(1+\frac{N(p)}{p^2}+\frac{N(p^2)}{p^4}+\cdot\cdot\cdot\right)=\prod_{p~\!\!\nmid ~\!\!30}\left(1-p^{-2}\right)^{-1}.
$$
Hence, we obtain
\begin{align}\label{eq16-2}
\sum_{n\le K}\sigma(30n+1)=15\prod_{p~\!\!\nmid ~\!\!30}\left(1-p^{-2}\right)^{-1}x^{2}+O\left(x\log x\right),
\end{align}
again via Lemma \ref{lem2}. 

It can be concluded from Eqs. (\ref{eq16-1}) and (\ref{eq16-2}) that 
$$
\lim_{K\rightarrow\infty}\frac{\sum_{n\le K}\sigma(30n)}{\sum_{n\le K}\sigma(30n+1)}=\frac{1595}{576}>1,
$$
from which our theorem follows immediately.
\end{proof}

\section{Proof of Theorem \ref{thm3}}

Below, we shall prove a much more stronger result comparing with Theorem \ref{thm3}.  

\begin{theorem}\label{thm4}
Let $L_i(x)=a_ix+b_i,~H_j(x)=c_jx+d_j~(1\le i,j\le k)$ be linear functions with integer coefficients such that $a_i,c_j>0$ and 
$$
a_id_j\neq b_ic_j \quad (~\forall~1\le i,j\le k).
$$
Suppose that for any prime $p$ there is at least one integer $n_p$ such that $p$ divides none of the $L_i(n_p)$, then there are $\gg x(\log x)^{-k}$ integers $n\le x$ such that 
$$\sigma(H_j(n))>\max_{1\le i\le k}\sigma(L_i(n))  \quad\quad (~\forall~ 1\le j\le k),$$
where the constant implied by $\gg$ depends on the choices of the functions $L_i$ and $H_j$. 
\end{theorem}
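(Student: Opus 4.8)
The plan is to exhibit the required $n$ inside a single arithmetic progression chosen so that the ``small'' forms $L_i$ are almost prime while the ``large'' forms $H_j$ are forced to carry many small prime factors. The guiding observation is that $\sigma(m)/m=\sum_{d\mid m}1/d$ is non-decreasing under divisibility: if $Q\mid m$ then $\sigma(m)/m\ge \sigma(Q)/Q$. Since $\prod_{p\le y}(1+1/p)\to\infty$, I first fix a constant $C>\max_{i,j}a_i/c_j$ and, for each $j$, a finite set $S_j$ of primes with $\prod_{p\in S_j}(1+1/p)>C$; moreover I take the $S_j$ pairwise disjoint and every $p\in\bigcup_j S_j$ larger than all the integers $|a_i|,|c_j|$ and $|a_id_j-b_ic_j|$ $(1\le i,j\le k)$. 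Put $Q_j=\prod_{p\in S_j}p$ and $Q=\prod_j Q_j$.

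By the Chinese remainder theorem I then choose a residue $a\bmod Q$ with $H_j(a)\equiv 0\pmod{Q_j}$ for every $j$; this is possible because each $p\in S_j$ is coprime to $c_j$, so $H_j$ has a unique root modulo $p$. The key point is that this residue keeps the $L_i$ coprime to $Q$: for $p\in S_j$ the chosen residue is the root of $H_j$ modulo $p$, and it would be a root of some $L_i$ modulo $p$ only if $a_id_j\equiv b_ic_j\pmod p$, which is impossible since $0<|a_id_j-b_ic_j|<p$ by the hypothesis $a_id_j\ne b_ic_j$. Hence for every $n\equiv a\pmod Q$ one has $Q_j\mid H_j(n)$ for all $j$, while $\gcd\big(\prod_i L_i(n),Q\big)=1$.

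Next I sieve inside the progression. Writing $n=a+Qt$, each $L_i(a+Qt)$ is a linear form in $t$ whose constant term is coprime to $Q$, and the assumption that for every prime $p$ some $n_p$ avoids all the $L_i$ guarantees that the product $\prod_iL_i$ has local density $\rho(p)<p$ at every prime, so the sieve is non-degenerate of dimension at most $k$. Applying a lower-bound sieve (the fundamental lemma of sieve theory) to remove from $\prod_iL_i(a+Qt)$ all prime factors up to $z=x^{1/s}$ for a fixed large $s=s(k)$ produces
$$
\#\{n\le x:\ n\equiv a\!\!\pmod{Q},\ p\mid L_i(n)\Rightarrow p>z\ (\forall i)\}\ \gg\ \frac{x}{Q}\prod_{\substack{p\le z\\ p\nmid Q}}\Big(1-\frac{\rho(p)}{p}\Big)\ \gg\ \frac{x}{(\log x)^{k}},
$$
the last step following from Mertens' theorem since $\rho(p)\le k$ for every $p$.

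Finally I check the inequalities on this set. If every prime factor of $L_i(n)$ exceeds $z=x^{1/s}$ and $L_i(n)\le (a_i+1)x$, then $L_i(n)$ has at most $s$ prime factors, so $\sigma(L_i(n))/L_i(n)\le (1-z^{-1})^{-s}=1+o(1)$ and thus $\sigma(L_i(n))\le (1+o(1))a_in$. On the other hand $Q_j\mid H_j(n)$ gives $\sigma(H_j(n))\ge (\sigma(Q_j)/Q_j)\,H_j(n)>C\,c_j n$. Since $Cc_j>a_i$ for all $i,j$ with a fixed positive gap, we obtain $\sigma(H_j(n))>\sigma(L_i(n))$ for all $i,j$ once $n$ is large, and discarding the finitely many small $n$ leaves $\gg x(\log x)^{-k}$ admissible values. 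I expect the main obstacle to be the sieve step: one must verify the non-degeneracy $\rho(p)<p$ uniformly (which is exactly what the hypothesis on $n_p$ provides) and invoke a $k$-dimensional lower-bound sieve delivering the sharp order $x(\log x)^{-k}$. The remainder is the arithmetic bookkeeping that lets a single residue class simultaneously inflate the $H_j$ and keep the $L_i$ rough, where the non-proportionality $a_id_j\ne b_ic_j$ is essential.
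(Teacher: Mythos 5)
Your proposal is correct, and it reaches the theorem by a genuinely different route than the paper, although the overall skeleton is the same: both arguments use the Chinese remainder theorem to restrict to a progression in which each $H_j(n)$ is forced to be divisible by a prescribed set of large primes (the non-proportionality $a_id_j\ne b_ic_j$ together with the largeness of those primes being exactly what keeps the $L_i$ non-degenerate modulo the new modulus), and then sieve to make the $L_i(n)$ almost prime. The difference is in the sieve ingredient and in the resulting bookkeeping. The paper invokes Heath--Brown's almost-prime $k$-tuples theorem, which gives $\gg x(\log x)^{-k}$ values of $n$ with $\Omega(L_i(n))\le G_k$ for an explicit constant $G_k$; this yields only $\sigma(L_i(n))\le 2^{G_k+1}An$, so the paper must force each $H_j(n)$ to be divisible by a string of primes with $\sum 1/p_\ell > A2^{G_k+3}$ --- note that the CRT modulus there depends on the sieve constant $G_k$. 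You instead invoke the fundamental lemma of sieve theory (dimension $k$, sifting level $z=x^{1/s}$ with $s=s(k)$ fixed, level of distribution $x^{1/2}$, remainders $\rho(d)\le k^{\omega(d)}$ trivially summable), which gives the same count of $n$ for which every prime factor of every $L_i(n)$ exceeds $x^{1/s}$; this yields the asymptotically sharp bound $\sigma(L_i(n))=(1+o(1))L_i(n)$, so you only need the fixed modulus $Q_j\mid H_j(n)$ with $\sigma(Q_j)/Q_j>C$ for any constant $C>\max_{i,j}a_i/c_j$, via the monotonicity $\sigma(m)/m\ge\sigma(Q)/Q$ for $Q\mid m$. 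What each buys: your version is more self-contained (the fundamental lemma is classical, whereas Heath--Brown's theorem is a much deeper result used here as a black box), and it decouples the constants --- the modulus $Q$ depends only on the coefficients, not on any sieve constant; the paper's version is shorter on the page because Heath--Brown's hypothesis (no fixed prime divisor for a system of linear forms) matches the theorem's hypothesis verbatim. The step you flagged as the main obstacle is indeed fine: non-degeneracy $\rho(p)<p$ holds at every prime ($\rho(p)=0$ for $p\mid Q$ by your choice of residue, and for $p\nmid Q$ the progression $a+Qt$ covers all residues mod $p$, so the hypothesis on $n_p$ applies), and Mertens' theorem converts $\prod_{p\le z}(1-\rho(p)/p)$ into the claimed $(\log x)^{-k}$ order. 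The only cosmetic slips are that $\sigma(H_j(n))>Cc_jn$ should read $\sigma(H_j(n))>C(c_jn+d_j)$ when $d_j<0$, and the $o(1)$ in your bound for $\sigma(L_i(n))$ is uniform in $x$ rather than $n$; both are absorbed by your final discarding of the bounded range of small $n$.
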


Let $\Omega(n)$ be the number of all prime factors of $n$.
We need a sieve result of Heath--Brown \cite[Theorem 1]{HB}. 

\begin{lemma}[Heath--Brown]\label{lem3}
Let $L_i(x)=a_ix+b_i,~(1\le i\le k)$ be linear functions with integer coefficients and $a_i>0$. Suppose that for any prime $p$ there is at least one integer $n_p$ such that $p$ divides none of the $L_i(n_p)$, then there are $\gg x(\log x)^{-k}$ integers $n\le x$ such that
$$
\max_{1\le i\le k} \Omega(L_i(n))\le  G_k,
$$
where the constant $G_k$ is given by
$$
G_k=\left\lfloor \log_2\left\lfloor \frac{3k^2+4k+4}{2}\right\rfloor\right\rfloor.
$$ 
\end{lemma}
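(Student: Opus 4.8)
Since the statement is Heath--Brown's theorem, invoked here as a black box, I outline the sieve-theoretic route I would follow to prove a result of this shape. The plan is to convert the $k$ simultaneous conditions $\Omega(L_i(n))\le G_k$ into a single sieve problem for the product sequence $\mathcal A=\{P(n):n\le x\}$, where $P(n)=\prod_{i=1}^{k}L_i(n)$. The admissibility hypothesis --- that for each prime $p$ some residue class is avoided by all the $L_i$ --- says precisely that $\rho(p):=\#\{n\bmod p: p\mid P(n)\}$ satisfies $\rho(p)<p$ for every $p$, which is exactly the condition that keeps all local factors of the sieve nonzero and makes the associated singular series converge to a positive constant. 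On average $\rho(p)=k+O(p^{-1})$, so the relevant sieve has dimension $\kappa=k$, and this is what forces the loss $(\log x)^{-k}$ in the final count --- the density predicted for prime $k$-tuples. The basic observation driving the translation is that if $P(n)$ has no prime factor below $z=x^{1/(G+1)}$, then each $L_i(n)\le a_i x+b_i$ is a product of primes all exceeding $x^{1/(G+1)}$, whence $\Omega(L_i(n))\le G$ for every $i$ at once.

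The arithmetic input needed is the level of distribution of $\mathcal A$, and here the linearity of the forms is decisive: for squarefree $d$ one has $\#\{n\le x: d\mid P(n)\}=\rho(d)x/d+O(\rho(d))$ with $\rho$ multiplicative and $\rho(p)\le k$, so the errors sum to $\sum_{d\le D}\rho(d)\ll D(\log D)^{k-1}$ and the level of distribution is $D=x^{1-\varepsilon}$ with no appeal to Bombieri--Vinogradov. With this level, a lower-bound sieve of dimension $k$ (the Diamond--Halberstam--Richert sieve, say) lets me sift $\mathcal A$ up to $z=x^{1/s}$ and still retain $\gg x(\log x)^{-k}$ admissible $n$, provided $s$ exceeds the $k$-dimensional sifting limit $\beta_k$. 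By the previous paragraph this already yields $\max_i\Omega(L_i(n))\le G$ with $G\approx\beta_k$; the trouble is that $\beta_k$ grows \emph{linearly} in $k$, so this crude route only delivers a bound of size $\sim k$, not the logarithmic $G_k$.

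The heart of the proof --- and the step I expect to be by far the hardest --- is squeezing the linear-in-$k$ bound down to $G_k=\lfloor\log_2\lfloor(3k^2+4k+4)/2\rfloor\rfloor$, and for this I would run an iterated large-prime-factor extraction of Buchstab type rather than a single sieve. The idea is to use the sieve not merely to delete small prime factors but to guarantee, for a positive proportion of the surviving $n$, that each $L_i(n)$ has a prime factor exceeding the square root of its so-far-unfactored part, and then to recurse on the cofactors. Each round removes, in a single prime, a fixed fraction of the logarithm of the remaining cofactor, so after about $\log_2(1/\eta)$ rounds --- where $x^{\eta}$ is the level to which an initial fundamental-lemma presift reaches --- the cofactor drops below $x^{\eta}$ and, being free of primes up to $x^{\eta}$, must equal $1$; this caps $\Omega(L_i(n))$ by a quantity of logarithmic size. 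The genuinely delicate points, which together pin down the explicit constant, are that this extraction must be executed \emph{simultaneously} for all $k$ forms while the weighted-sieve lower bound stays strictly positive at every scale, and that the admissible depth of the recursion must be balanced against the sieve losses accumulated in dimension $k$ at each stage; carrying out this optimization is what produces the value $R=\lfloor(3k^2+4k+4)/2\rfloor$ and hence the bound $G_k=\lfloor\log_2 R\rfloor$.
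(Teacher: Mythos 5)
The paper itself contains no proof of this lemma: it is imported verbatim as Theorem~1 of Heath--Brown's article \emph{Almost-prime $k$-tuples} (the reference [HB]), so your sketch can only be measured against Heath--Brown's original argument. Your first two paragraphs are correct and standard: admissibility is exactly $\rho(p)<p$ for the product polynomial, linearity gives level of distribution $x^{1-\varepsilon}$ with no bilinear input, and a dimension-$k$ lower-bound sieve past the sifting limit $\beta_k\asymp k$ yields $\gg x(\log x)^{-k}$ integers $n\le x$ with $\max_i\Omega(L_i(n))=O(k)$. You also correctly locate all the content of the theorem in improving $O(k)$ to $O(\log k)$.

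The gap is your third paragraph, which is a plan rather than a proof, and moreover proposes a mechanism that is neither substantiated nor the one that actually works. Heath--Brown does not run an iterated Buchstab extraction; the logarithm enters through the weight $2^{\Omega(m)}$. He bounds, over integers $n$ whose product $\prod_i L_i(n)$ survives a lower-bound sieve, the weighted sums $\sum_n 2^{\Omega(L_i(n))}$ from above, and concludes that for $\gg x(\log x)^{-k}$ of the surviving $n$ one has $\sum_{i=1}^{k}2^{\Omega(L_i(n))}\le\bigl\lfloor(3k^2+4k+4)/2\bigr\rfloor$; since $2^{\Omega(m)}\le T$ forces $\Omega(m)\le\log_2 T$, the constant $G_k$ falls out, with the quadratic $\sim 3k^2/2$ produced by the sieve constants (a factor of order $k$ from the number of forms and another from the dimension-$k$ upper-bound sieve per form), not by any recursion depth. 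By contrast, the step your route hinges on --- that a weighted sieve can guarantee, for a positive proportion of $n$, \emph{simultaneously} for all $k$ forms and at every stage of the recursion, a prime factor exceeding the square root of the remaining cofactor, with the lower bound staying positive throughout --- is not a known output of any sieve, and you offer no positivity argument for even a single round. Your own bookkeeping also fails to match the target: depth $\log_2(1/\eta)$ with a fundamental-lemma presift at level $\eta\asymp 1/k$ would predict a constant near $\log_2 k$, whereas the true $G_k$ is $2\log_2 k+O(1)$, which signals that the claimed optimization ``producing'' $R=\lfloor(3k^2+4k+4)/2\rfloor$ is reverse-engineered from the answer rather than derived. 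As written, the proposal establishes only the linear-in-$k$ bound of your second paragraph.
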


Our idea on the proof of Theorem \ref{thm4}, roughly speaking, is to choose some suitable integers $n$ such that $L_i(n)$ has `small' prime factors whereas $H_j(n)$ has  `many' prime factors. Let's implement our idea below.

\begin{proof}[Proof of Theorem \ref{thm4}]
Let $p_\ell$ be the $\ell^{\text{th}}$ prime. For any $h$ there exists some integer $n_\ell$ such that
\begin{align}\label{eq16-3}
p_\ell\nmid \prod_{1\le i\le k}L_i(n_\ell)
\end{align}
by the condition of the theorem. Let
$$
A=\max_{1\le i\le k}a_i, \quad  B=\max_{1\le j\le k}c_j \quad \text{and} \quad C=\max_{1\le i,j\le k}|a_id_j-b_ic_j|
$$
Suppose that $p_{\ell_1}$ is the smallest prime number strictly greater than $A+B+C+k$, then
there are $k$ strings of consecutive primes 
$$
p_{\ell_1+1},p_{\ell_1+2},...,p_{\ell_2}; \quad p_{\ell_2+1},p_{\ell_2+2},...,p_{\ell_3}; \quad \cdot\cdot\cdot \cdot\cdot\cdot; \quad p_{\ell_k+1},p_{\ell_k+2},...,p_{\ell_{k+1}}
$$
such that for any $1\le s\le k$ we have
\begin{align}\label{eq16-4}
\sum_{\ell=\ell_s+1}^{\ell_{s+1}}\frac{1}{p_\ell}>A2^{G_k+3}.
\end{align}
By the Chinese reminder theorem, the system of congruences
$$
\begin{cases}
n\equiv n_\ell\pmod{p_\ell}, \quad\quad \quad\quad (~\forall~1\le \ell\le \ell_1)\\
c_1n+d_1\equiv 0\pmod{p_\ell}, \quad (~\forall~\ell_1+1\le \ell\le \ell_2),\\
c_2n+d_2\equiv 0\pmod{p_\ell}, \quad (~\forall~\ell_2+1\le \ell\le \ell_3),\\
\qquad\vdots\\
c_kn+d_k\equiv 0\pmod{p_\ell}, \quad (~\forall~\ell_k+1\le \ell\le \ell_{k+1})
\end{cases}
$$
has exactly one solution $n\equiv n_0\pmod{P}$, where
$$
P=p_1\cdot\cdot\cdot p_{\ell_1} \prod_{1\le s\le k}\Big(\prod_{\ell_s+1\le \ell\le \ell_s+1}p_\ell\Big).
$$

Suppose now that $n=mP+n_0\le x$, then $m\le (x-n_0)/P$. We claim that 
$$
\widetilde{L_i}(m)=L_i(n)=L_i(mP+n_0)=a_iPm+(a_in_0+b_i) \quad (1\le i\le k)
$$
has no fixed prime divisor, that is to say, for any prime $p$ there is at least one integer $\widetilde{m}_p$ such that $p$ divides none of the $\widetilde{L_i}(\widetilde{m}_p)$. In fact, we choose $\widetilde{m}_p=0$ for $p\le p_{\ell_1}$. Then by the system of congruences and Eq. (\ref{eq16-3}), we have
$$
\widetilde{L_i}(0)\equiv a_in_0+b_i\equiv a_in_\ell+b_i\not\equiv 0\pmod{p_\ell}
$$
providing that $\ell\le \ell_1$.
For any $p_{\ell_1+1}\le p\le p_{\ell_{k+1}}$, we choose again $\widetilde{m}_p=0$. Note that there is some $1\le j\le k$ such that
$$
c_jn_0+d_j\equiv 0\pmod{p}
$$
by the system of congruences again. Then
$$
n_0\equiv -c_j^{-1}d_j \pmod{p}
$$
due to the fact that $p> B$.
Thus, for this prime $p$ we have
$$
\widetilde{L_i}(0)\equiv a_in_0+b_i\equiv a_i(-c_j^{-1}d_j)+b_i \not\equiv 0\pmod{p}
$$
since $p>C$ and
$$
a_id_j\neq b_ic_j \quad (~\forall~1\le i,j\le k).
$$
Finally, for any prime $p>p_{\ell_{k+1}}$, we have $(p,a_iP)=1$. Then 
there is exactly one solution to the congruence
$$
\widetilde{L_i}(m)\equiv 0 \pmod{p}
$$
for each $\widetilde{L_i}(m)$. Hence, the congruence 
$$
\widetilde{L_1}(m)\cdot\cdot\cdot\widetilde{L_k}(m)\equiv 0 \pmod{p}
$$
has at most $k(<p)$ solutions, which completes the proof of our claim.

By our claim and Lemma \ref{lem3} there are 
$$
\gg \frac{x-n_0}{P}\left(\log \frac{x-n_0}{P}\right)^{-k}\gg_k x(\log x)^{-k}
$$ 
integers $m\le (x-n_0)/P$ such that
$$
\max_{1\le i\le k}\Omega\left(L_i(n)\right)=\max_{1\le i\le k} \Omega\left(\widetilde{L}_i(m)\right)\le G_k.
$$
For such integers $m$, i.e., $n=mP+n_0$, we have
\begin{align}\label{eq16-5}
\max_{1\le i\le k}\sigma\left(L_i(n)\right)\le 2An\cdot 2^{G_k}=2^{G_k+1}An,
\end{align}
provided that $n$ is sufficiently large.
On the other hand, for such integers $n=mP+n_0$ we have
\begin{align}\label{eq16-6}
\sigma(H_j(n))\ge \sum_{\ell=\ell_j+1}^{\ell_{j+1}}\frac{c_jn+d_j}{p_\ell}\ge \frac{n}{2}A2^{G_k+3}=2^{G_k+2}An
\end{align}
via Eq. (\ref{eq16-4}), provided that $n$ is sufficiently large.
Our theorem now follows directly from Eqs. (\ref{eq16-5}) and (\ref{eq16-6}).
\end{proof}

\begin{proof}[Proof of Theorem \ref{thm3}]
We take $L_1(x)=6x+17$, $L_2(x)=6x+7$, $H_1(x)=2x+5$ and $H_2(x)=5x+4$ in Theorem \ref{thm4}.
It can be easily checked that $L_1$ and $L_2$ have no fixed prime divisor and
$$
6\cdot 5\neq 2\cdot17, \quad 6\cdot 4\neq 5\cdot 17, \quad  6\cdot 5\neq 2\cdot7, \quad 6\cdot 4 \neq 5\cdot 7.
$$
Thus, we complete the proof of Theorem \ref{thm3}.
\end{proof}

\begin{remark}\label{remark1}
It is worth mentioning that some corresponding results can be established in the same way if one replaces the function $\sigma(\cdot)$ by $\sigma_s(\cdot)$ in Theorem \ref{thm4} with $s\le 1$ (but not applicable for $s>1$ since the corresponding inequality for Eq. (\ref{eq16-4}) will no longer hold). These variants of Theorem \ref{thm4}, in principle, provide partial answers to some other problems of Pongsriiam (see \cite[Problem 3.2]{Pongsriiam} and \cite[Problem 3.3]{Pongsriiam}). We do not expand  details of these variants for the simplicity of our note as far as possible.
\end{remark}

\bigskip

\noindent{\bf Acknowledgements.}

The first named author is supported by National Natural Science Foundation of China (Grant No. 12201544), Natural Science Foundation of Jiangsu Province in China (Grant No. BK20210784) and China Postdoctoral Science Foundation (Grant No. 2022M710121). He is also supported by the foundations (Grant No. JSSCBS20211023).

The second named author is supported by  the National Natural Science Foundation of China (Grant No. 12071208).

The third named author was supported by UTUGS funding and was working in the Academy of Finland project No. $333707$.

\end{document}